\documentclass[12pt]{amsart}
\usepackage{geometry}
\usepackage{hyperref}
\usepackage{mathpazo}
\usepackage{amssymb,url,setspace,xcolor,amsmath}
\newtheorem{theorem}{Theorem}
\newtheorem*{theorem*}{Theorem}
\newtheorem{proposition}{Proposition}
\newtheorem{lemma}{Lemma}
\newtheorem{corollary}{Corollary}
\theoremstyle{remark}

\title[Hilbert-Schmidt Hankel operators]{Hilbert-Schmidt Hankel operators with harmonic symbols on the Bergman space of strongly pseudoconvex domains in $\mathbb{C}^n$}

\author{Timothy G. Clos}
\address[Timothy G. Clos]{Kent State University, Department of Mathematical 
	Sciences, Kent, OH 44242, USA}
\email{tclos@kent.edu}

\date{\today}
\begin{document}

\begin{abstract}
    We characterize Hilbert-Schmidt Hankel operators on the Bergman spaces of smooth bounded strongly pseudoconvex domains in $\mathbb{C}^n$ for $n \geq 2$.  We consider harmonic symbols of class $C^3$ up to the closure of the domain and show $H_{\phi}$ is Hilbert-Schmidt if and only if $\phi$ is holomorphic on the domain.
\end{abstract}

\maketitle

\section{Introduction}
Hilbert-Schmidt Hankel operators on Bergman spaces of domains in $\mathbb{C}^n$ for $n\geq 2$ are often classically studied on the unit ball and bounded pseudoconvex complete Reinhardt domains.  For a characterization of Hilbert-Schmidt Hankel operators on the Bergman space of the unit ball, see \cite{ZhuPaper}.  A characterization for the Bergman space of bounded pseudoconvex complete Reinhardt domains is seen in \cite{cz2}.  The proofs of the results in \cite{cz2} use the fact that normalized holomorphic monomials are an orthonormal basis for the Bergman space.   Additionally, the symbols studied are usually conjugate holomorphic.  Consequently, the proofs rely on explicit computations.  It is natural to study bounded strongly pseudoconvex domains as a generalization for a few reasons, including the nice diagonal asymptotics for the Bergman kernel near boundary points.  Additionally, we can use the nice mapping properties of the Berezin transform on such domains.  The advantage of this approach is one does not rely on explicit computations.  Thus we are able to generalize the results on the unit ball to other domains where one does not necessarily have a nice explicit orthonormal basis for the Bergman space.   \par 
We will briefly survey some previous work.  In one complex dimension, \cite{afp} showed that
for holomorphic function on the unit disc $\phi$, $H_{\overline{\phi}}$ is of Shatten $p$-class if and only if $\phi$ is in the holomorphic Besov $p$-space given by
\[\int_{\mathbb{D}}|\phi'(z)|^p (1-|z|^2)^{p-2}dA(z)<\infty.\]  See also \cite[Theorem 8.29]{ZhuBook}.  For conjugate holomorphic symbols on smoothly bounded strongly pseudoconvex domains, \cite{hui} characterized the Schatten class membership of the corresponding Hankel operators on Bergman spaces.  In higher complex dimensions, \cite{gs} studied Schatten-$p$ class membership of Hankel operators with conjugate holomorphic symbols that are smooth up the the closure of smooth bounded pseudoconvex domains.  In \cite{cz1}, the authors characterize Hilbert-Schmidt Hankel operators with conjugate holomorphic symbols on complex ellipsoids.  The papers \cite{tl} and later \cite{cz2} consider conjugate holomorphic symbols on various bounded Reinhardt domains. We take a slightly different approach and consider more general symbols. We consider symbols that are sufficiently smooth up to the closure of the domain and harmonic on the interior.  The domains we consider are $C^{\infty}$-smooth, bounded, and strongly pseudoconvex.   Recall symbol $\phi$ is said to be harmonic if $\phi$ has the local mean value property on $\Omega$.  For $\phi\in C^2(\overline{\Omega})$, this condition is equivalent to $\Delta\phi=0$ on $\Omega$.  Here, $\Delta$ is the (real) Laplacian.  Recall $A^2(\Omega)$ is the Hilbert space consisting of holomorphic, square integrable functions on $\Omega$.  It is well known that the point evaluation map is bounded on $A^2(\Omega)$.  So $A^2(\Omega)$ is a reproducing kernel space with kernel $K_z$.  This kernel is known as the Bergman kernel.  One can show that 
$K_z(w)$ is holomorphic in $w$ and conjugate holomorphic in $z$.  Furthermore,
\[\|K_z\|^2=K_z(z).\]  We denote $k_z$ as the normalized Bergman kernel.
That is,
\[k_z(w)=\frac{K_z(w)}{\|K_z\|}=\frac{K_z(w)}{\sqrt{K_z(z)}}.\]
A useful tool often used in the study operator theory in Bergman spaces, called the Berezin transform, was alluded to earlier in the introduction.  The Berezin transform of $f\in L^{\infty}(\Omega)$, denoted by $B(f)$, is defined as
\[B(f)(\zeta)=\left\langle fk_{\zeta},k_{\zeta}\right\rangle\] where $\zeta\in \Omega$.  We also define the distance to the boundary of $\Omega$ as
\[d_{b\Omega}(z)=\inf_{w\in b\Omega}\|z-w\|_E\] where $\|\cdot\|_E$ is the Euclidean norm.  Denoting $P^{\Omega}$ as the Bergman projection, we recall the Hankel operator with symbol $\phi\in L^{\infty}(\Omega)$ acting on the Bergman space $A^2(\Omega)$ is defined as \[H_{\phi}(f)=(I-P^{\Omega})(\phi f).\]   
We are now ready to state the main results.

\section{The Main Result}

\begin{theorem}\label{prelim}
	Let $\Omega\subset \mathbb{C}^n$ be a $C^{\infty}$-smooth bounded strongly pseuodoconvex domain and $n\geq 2$.  Suppose $\phi\in C^3(\overline{\Omega})$ and $H_{\phi}$ is Hilbert-Schmidt on $A^2(\Omega)$.  Then $\overline{\partial}_b(\phi)=0$ on $b\Omega$.  That is, the tangential component of $\overline{\partial}\phi$ vanishes on $b\Omega$.
\end{theorem}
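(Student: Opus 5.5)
We argue by contradiction, using the Berezin transform of $|H_\phi|^2 = H_\phi^*H_\phi$ together with the Bergman kernel asymptotics on strongly pseudoconvex domains. The starting point is the standard identity
\[
\|H_\phi\|_{HS}^2=\int_\Omega \|H_\phi k_z\|^2 K_z(z)\,dV(z) = \int_\Omega \widetilde{|H_\phi|^2}(z)\,K_z(z)\,dV(z),
\]
where $\widetilde{T}(z)=\langle Tk_z,k_z\rangle$. Indeed, if $\{e_j\}$ is an orthonormal basis of $A^2(\Omega)$, then
\[
\sum_j \|H_\phi e_j\|^2=\sum_j\int_\Omega\int_\Omega \ldots
\]
and the integral form follows from $\operatorname{tr}(T)=\int_\Omega \langle TK_z,K_z\rangle\, dV(z)$ for positive $T$. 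On a strongly pseudoconvex domain, Fefferman's asymptotics give $K_z(z)\approx d_{b\Omega}(z)^{-(n+1)}$. The Hilbert--Schmidt hypothesis therefore forces
\[
\int_\Omega \|H_\phi k_z\|^2\, d_{b\Omega}(z)^{-(n+1)}\,dV(z)<\infty .
\]

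Next we obtain a lower bound for $\|H_\phi k_z\|$ near a boundary point $p$ where $\overline{\partial}_b\phi(p)\neq 0$. We Taylor expand $\phi$ at the nearest boundary point $\pi(z)$ to $z$, using $\phi\in C^3$:
\[
\phi(w)=\phi(\pi(z))+\sum_j \partial_j\phi\,(w_j-\pi_j)+\sum_j\bar\partial_j\phi\,\overline{(w_j-\pi_j)}+O(|w-\pi(z)|^2).
\]
The constant and holomorphic linear parts contribute nothing, since they multiply $k_z$ into $A^2$. It remains to estimate $H$ applied to the antiholomorphic linear part plus the remainder. We split $\overline{(w-\pi)}$ into a complex normal component and a complex tangential component $L(w)=\langle \overline{w-\pi},\overline{\partial}_b\phi\rangle$. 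Then:
\begin{itemize}
\item The normal component has size $\lesssim |w-\pi|$ in the nonisotropic sense. On the set where $k_z$ concentrates, namely the polydisc of size $\delta$ in the normal direction and $\sqrt\delta$ in the tangential directions (with $\delta=d_{b\Omega}(z)$), the normal part is $O(\delta)$.
\item The quadratic remainder is $O(\delta)$ on the same set.
\item The tangential part is of size $\sqrt\delta$.
\end{itemize}
The key lower bound we aim for is $\|H_{\bar L}k_z\|\gtrsim |\overline{\partial}_b\phi(\pi(z))|\sqrt{\delta}$. To prove it we first compute on the unit ball, or on a model Siegel domain, where $H_{\bar w_j}k_z$ is explicit: there $\|H_{\bar w_j}k_z\|^2 = B(|w_j|^2)-|B(w_j)|^2$, which for tangential directions is $\approx (1-|z|^2)$. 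We then transfer this to $\Omega$ through the Berezin-transform identity $\|H_f k_z\|^2=B(|f|^2)(z)-|B(f)(z)|^2$, valid for holomorphic $f$. Here we use the known fact that on strongly pseudoconvex domains $B$ localizes, and that the Berezin transform of $|w-\pi|^2$ has the same asymptotics as on the osculating ball, via Fefferman's expansion $K(z,w)=c\,\Phi(z,w)^{-(n+1)}+\ldots$.

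Combining the two steps gives $\|H_\phi k_z\|^2\ge c\,\delta$ on a nontangential cone region near $p$, once the $O(\delta)$ error terms are absorbed. The integral then contains
\[
\int \delta\cdot \delta^{-(n+1)}\,dV\approx \int_0 \delta^{-n}\,d\delta=\infty
\]
for $n\ge1$, which contradicts the Hilbert--Schmidt hypothesis. Hence $\overline{\partial}_b\phi=0$ on $b\Omega$. The main obstacle is the rigorous lower bound on $\|H_{\bar L}k_z\|$ on a general strongly pseudoconvex domain. We would handle it by the localization approach: use the asymptotic expansion of the Bergman kernel off the diagonal, with the error controlled on the nonisotropic ball, to reduce to the ball computation. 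The condition $n\ge2$ is needed so that a complex tangential direction exists at all.
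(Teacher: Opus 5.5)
Your route is genuinely different from the paper's, and it is sound after one correction. The paper never estimates $\|H_\phi k_z\|$ directly. It builds $\psi$ with $\psi=\phi$ on $b\Omega$ and $\overline{\partial}\psi\in\operatorname{dom}(\overline{\partial}^*)$. It then uses BC-regularity of the Berezin transform to get $\langle\overline{\partial}\phi\,k_z,\overline{\partial}\psi\,k_z\rangle\to|\overline{\partial}_b\phi|^2$ at the boundary. Finally it pairs $\overline{\partial}(H_\phi k_z)=(\overline{\partial}\phi)k_z$ against a compactly supported mollification $\chi_\varepsilon$ of $\overline{\partial}\psi$, with the crude bound $\|\overline{\partial}^*(\chi_\varepsilon k_z)\|\lesssim\varepsilon^{-1}$. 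In effect this gives only an averaged form of $\|H_\phi k_z\|\gtrsim d_{b\Omega}(z)$. That weaker bound is exactly why $n\ge2$ is the threshold in the paper: $\int_0\delta^{2}\delta^{-(n+1)}\,d\delta$ diverges iff $n\ge2$. The advantage is that every input is soft: the diagonal asymptotics of $K_z(z)$, BC-regularity, and $L^2$-boundedness of $B$.

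Your Taylor-expansion argument aims at the sharp bound $\|H_\phi k_z\|^2\approx d_{b\Omega}(z)$. This makes the divergence robust, since you get $\int_0\delta^{-n}\,d\delta$. Via $\operatorname{tr}((H_\phi^*H_\phi)^{p/2})\ge\int_\Omega\|H_\phi k_z\|^{p}K_z(z)\,dV(z)$ for $p\ge2$, it would even exclude $H_\phi\in S_p$ for all $p\le 2n$. In your argument $n\ge2$ enters only through the existence of a complex tangential direction and the convergence of the model integrals. The price is that you need the off-diagonal estimate $|K(z,w)|\lesssim|\Phi(z,w)|^{-(n+1)}$ (Fefferman, Boutet de Monvel--Sj\"ostrand), not just diagonal asymptotics.

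The correction concerns the region. Over a nontangential cone at the single point $p$, the slice at depth $\delta$ has volume $\approx\delta^{2n-1}$; for a Kor\'anyi admissible region it is $\approx\delta^{n}$. In both cases $\int\delta\cdot\delta^{-(n+1)}\,dV$ converges, and there is no contradiction. You need the lower bound on a full collar $\{z:\pi(z)\in P,\ d_{b\Omega}(z)<\varepsilon\}$ over an open patch $P\subset b\Omega$ on which $\overline{\partial}_b\phi\ne0$, as the paper does with $U_\varepsilon$. Your estimates are uniform in $\pi(z)$, so this is available, and it is what your computation $\int_0\delta^{-n}\,d\delta$ actually uses.

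For the crux, you do not need to match asymptotics with the osculating ball. The off-diagonal bound together with $K_z(z)\gtrsim\delta^{-(n+1)}$ gives $|k_z(w)|^2\lesssim\delta^{n+1}|\Phi(z,w)|^{-2(n+1)}$. Rescale by $\delta$ in the complex normal direction and by $\sqrt{\delta}$ in the complex tangential directions. This shows that $|k_z|^2$ puts mass at most $C\eta^2$ on $\{|L(w)-L(z)|<\eta\sqrt{\delta}\}$, uniformly for $\pi(z)\in P$. Hence $\|H_{\overline{L}}k_z\|^2=B(|L-L(z)|^2)(z)\gtrsim\delta$.

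The same rescaling gives $B(|(w-\pi(z))_\nu|^2)(z)+B(|w-\pi(z)|^4)(z)\lesssim\delta^2$ for $n\ge2$. Combined with $\|H_fk_z\|\le\|fk_z\|=B(|f|^2)(z)^{1/2}$, this controls the normal and remainder terms. One small fix: the identity you quote should read $\|H_{\overline{f}}k_z\|^2=B(|f|^2)(z)-|f(z)|^2$ for holomorphic $f$.
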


We state the main result as a corollary to Theorem \ref{prelim}.

\begin{corollary}\label{cor1}
	Let $\Omega\subset \mathbb{C}^n$ be a $C^{\infty}$-smooth bounded strongly pseuodoconvex domain and $n\geq 2$.  Suppose $\phi\in C^3(\overline{\Omega})$ and $\phi$ is harmonic on $\Omega$.  Then $H_{\phi}$ is Hilbert-Schmidt on 
	$A^2(\Omega)$ if and only if $\phi$ is holomorphic on $\Omega$.
	
\end{corollary}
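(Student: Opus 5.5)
The "if" direction is immediate. If $\phi$ is holomorphic on $\Omega$, then $\phi\in C^3(\overline{\Omega})$ is bounded, so $\phi f\in A^2(\Omega)$ for every $f\in A^2(\Omega)$. Hence $P^{\Omega}(\phi f)=\phi f$ and $H_{\phi}=0$, which is trivially Hilbert-Schmidt.

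For the "only if" direction, suppose $H_{\phi}$ is Hilbert-Schmidt. By Theorem \ref{prelim}, $\overline{\partial}_b\phi=0$ on $b\Omega$. Since $n\geq 2$ and $\Omega$ is a smooth bounded (strongly pseudo)convex domain with connected boundary, the Hartogs--Bochner extension theorem (the Kohn--Rossi form of it) applies. Here $\phi|_{b\Omega}$ is a $C^3$, in particular $C^1$, CR function on $b\Omega$. So there exists $F\in \mathcal{O}(\Omega)\cap C(\overline{\Omega})$, in fact $C^1$ up to the boundary by the regularity of the $\overline{\partial}$-Neumann problem on strongly pseudoconvex domains, with $F=\phi$ on $b\Omega$. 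The point where $n\geq 2$ is essential is exactly this extension step; in one variable every boundary function is CR and the theorem fails.

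Next compare $\phi$ with $F$. Holomorphic functions are harmonic, since $\Delta=4\sum_j \partial_j\overline{\partial}_j$. Thus $u=\phi-F$ is harmonic on $\Omega$, continuous on $\overline{\Omega}$, and vanishes on $b\Omega$. By the maximum principle applied to $\operatorname{Re}u$ and $\operatorname{Im}u$, $u\equiv 0$, so $\phi=F$ is holomorphic on $\Omega$.

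The main obstacle is justifying the extension step carefully. That means checking that the boundary is connected, which holds because $\Omega$ is bounded and $\Omega$ may be assumed connected as a domain. It also means making sure only continuity of the extension up to $\overline{\Omega}$ is needed, and this is all the maximum principle requires, so finer regularity need not be tracked. The orthogonality of $\phi$ would not even need to be used beyond harmonicity.
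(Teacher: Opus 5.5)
Your argument is correct and is essentially the paper's proof: $H_\phi=0$ when $\phi$ is holomorphic, and conversely Theorem~\ref{prelim} gives $\overline{\partial}_b\phi=0$, the CR extension theorem (the paper cites Chen--Shaw, Theorem 3.2.2) produces a holomorphic $F$ continuous up to $b\Omega$ with $F=\phi$ there, and the maximum principle applied to the harmonic function $\phi-F$ gives $\phi=F$. One small correction: $b\Omega$ is not connected merely because $\Omega$ is a bounded connected domain (a spherical shell is a counterexample); it is connected here because a bounded pseudoconvex domain in $\mathbb{C}^n$, $n\geq 2$, cannot have a bounded component of $\mathbb{C}^n\setminus\overline{\Omega}$, by the Hartogs extension phenomenon.
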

	
As seen in \cite{afp}, this corollary is not true for bounded domains in $\mathbb{C}$.  That is, on the unit disc in $\mathbb{C}$ we let $\phi(z)=\overline{z}$.  Since $\phi\in C^{\infty}(\overline{\mathbb{D}})$ and $\phi$ is conjugate holomorphic, $\overline{\phi}$ satisfies the estimate
\[\int_{\mathbb{D}}|\overline{\phi}'(z)|^2 dA(z)<\infty.\]  This implies $H_{\overline{z}}$ is Hilbert-Schmidt but $\overline{z}$ is harmonic and not holomorphic.

\section{Proof of Theorem \ref{prelim}}

We begin with the following proposition.

\begin{proposition}\label{prop1}
	Suppose $\Omega$ is a $C^{\infty}$-smooth bounded pseudoconvex domain and $\phi\in C^3(\overline{\Omega})$.  Then there exists
	$\psi\in C^2(\overline{\Omega})$ so that $\overline{\partial}(\psi)\in \textbf{dom}(\overline{\partial}^*)$ and $\phi=\psi$ on $b\Omega$.  
\end{proposition}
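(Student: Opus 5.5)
The plan is to correct $\phi$ by a function vanishing on $b\Omega$, chosen so that the $\overline{\partial}$-Neumann boundary condition holds. Recall that for a $(0,1)$-form $u=\sum_j u_j\,d\overline{z}_j$ with coefficients in $C^1(\overline{\Omega})$, membership in $\textbf{dom}(\overline{\partial}^*)$ is equivalent to the boundary condition
\[\sum_{j=1}^n u_j\,\frac{\partial \rho}{\partial z_j}=0 \quad\text{on } b\Omega,\]
where $\rho$ is a $C^\infty$ defining function for $\Omega$ with $|\nabla\rho|=1$ on $b\Omega$. So it suffices to find $\psi\in C^2(\overline{\Omega})$ with $\psi=\phi$ on $b\Omega$ and $\sum_j \frac{\partial\psi}{\partial\overline{z}_j}\frac{\partial\rho}{\partial z_j}=0$ on $b\Omega$. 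Equivalently, the complex normal derivative $\overline{N}\psi$ must vanish on $b\Omega$, where
\[\overline{N}=\sum_j \frac{\partial\rho}{\partial z_j}\frac{\partial}{\partial \overline{z}_j}.\]
(Pseudoconvexity plays no role here; only smoothness of $b\Omega$ is used.)

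I will take the ansatz $\psi=\phi-\rho h$, where $h\in C^2(\overline{\Omega})$ is to be chosen. Then $\psi=\phi$ on $b\Omega$ automatically. On $b\Omega$ we have $\rho=0$, so
\[\overline{N}\psi=\overline{N}\phi-h\,\overline{N}\rho=\overline{N}\phi-h\sum_j\left|\frac{\partial\rho}{\partial z_j}\right|^2=\overline{N}\phi-\tfrac14 h.\]
The last equality uses $\sum_j|\partial\rho/\partial z_j|^2=\tfrac14|\nabla\rho|^2=\tfrac14$ on $b\Omega$. This suggests the choice
\[h=\frac{4\,\overline{N}\phi}{4\sum_j|\partial\rho/\partial z_j|^2},\]
defined on a neighborhood $U$ of $b\Omega$ where the denominator does not vanish, and extended to all of $\overline{\Omega}$ by multiplying with a smooth cutoff $\chi$ that equals $1$ near $b\Omega$ and is supported in $U$. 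Then $h=4\overline{N}\phi$ on $b\Omega$, so $\overline{N}\psi=0$ there, as required.

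The main point to verify, and the reason the hypothesis is $C^3$, is regularity. Since $\phi\in C^3(\overline{\Omega})$ and $\rho$ is smooth, $\overline{N}\phi\in C^2(\overline{\Omega})$, hence $h\in C^2(\overline{\Omega})$ and $\psi=\phi-\rho h\in C^2(\overline{\Omega})$. Consequently $\overline{\partial}\psi$ has $C^1(\overline{\Omega})$ coefficients, and it lies in the domain of the adjoint by the standard integration-by-parts characterization (Folland–Kohn): the boundary term in $\langle \overline{\partial} f,\overline{\partial}\psi\rangle$ vanishes exactly when the normal component of $\overline{\partial}\psi$ vanishes on $b\Omega$. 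I expect the only delicate step to be stating this characterization carefully for forms that are merely $C^1$ up to the boundary. That is handled by approximation, or by noting that the integration by parts only needs $C^1(\overline{\Omega})$ coefficients.
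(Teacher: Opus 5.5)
Your proposal is correct and is essentially the paper's proof: both write $\psi=\phi-\rho\nu$ and choose $\nu$ so that $\sum_j \frac{\partial\psi}{\partial\overline{z_j}}\frac{\partial\rho}{\partial z_j}=0$ on $b\Omega$, using $\rho=0$ there to discard the $\rho\,\overline{\partial}\nu$ term. The differences are only bookkeeping: you divide by $\sum_j|\partial\rho/\partial z_j|^2$, which equals $\tfrac14$ on $b\Omega$ when the real gradient is normalized and accounts for your factor $4$ and your cutoff, whereas the paper's $\nu=\sum_j \frac{\partial\rho}{\partial z_j}\frac{\partial\phi}{\partial\overline{z_j}}$ corresponds to normalizing that sum to $1$; you also spell out the $C^1(\overline{\Omega})$ regularity of $\overline{\partial}\psi$ that the boundary characterization of $\textbf{dom}(\overline{\partial}^*)$ requires.
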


\begin{proof}
	Let $\rho$ be a defining function for $\Omega$ with gradient scaled so that
	$|\nabla(\rho)|=1$ on $b\Omega$.  Let us define 
	\[\psi=\phi-\rho \nu\]
	where $\nu$ is to be determined by using the compatibility condition for the domain of $\overline{\partial}^*$, called $\textbf{dom}(\overline{\partial}^*)$ as seen in \cite{ChenShawBook}.  Recall the compatibility condition for a $(0,1)$-form 
	$F=\sum_{j=1}^n F_j d\overline{z_j}$.  It states $F\in \textbf{dom}(\overline{\partial^*})$ if and only if
	\[\sum_{j=1}^n F_j \frac{\partial \rho}{\partial z_j}=0\] on $b\Omega$.  In other words, the complex normal direction of $F$ vanishes on the boundary.  Let us use this condition to solve for $\nu$.  We have \[\overline{\partial}\psi=\sum_{j=1}^n \frac{\partial \psi}{\partial \overline{z_j}}d\overline{z_j}\]
	Then the compatibility condition applied to $\overline{\partial}(\psi)$ states that on $b\Omega$,
	\[0=\sum_{j=1}^n \frac{\partial \rho}{\partial z_j}\left(\frac{\partial \phi}{\partial \overline{z_j}}-\rho\frac{\partial \nu}{\partial \overline{z_j}}-\nu \frac{\partial \rho}{\partial \overline{z_j}}\right)= \sum_{j=1}^n \frac{\partial \rho}{\partial z_j}\left(\frac{\partial \phi}{\partial \overline{z_j}}-\nu \frac{\partial \rho}{\partial \overline{z_j}}\right).  \]  Therefore,
	solving for $\nu$, we have
	\[\nu=\sum_{j=1}^n \frac{\partial \rho}{\partial z_j}\frac{\partial \phi}{\partial \overline{z_j}}. \] 	
	Therefore,
	\[\psi=\phi-\rho\sum_{j=1}^n \frac{\partial \rho}{\partial z_j}\frac{\partial \phi}{\partial \overline{z_j}} \] satisfies the proposition.

\end{proof}

It is well known that the following integral estimate holds for Hilbert-Schmidt operator $H_{\phi}$.  

\begin{lemma}\label{lem1}
	Let $\Omega$ be a bounded domain in $\mathbb{C}^n$ for $n\geq 1$ and suppose $\phi\in L^{\infty}(\Omega)$.  Then $H_{\phi}$ is Hilbert-Schmidt on $A^2(\Omega)$ if and only if
	\[\int_{\Omega}\|H_{\phi}K_z\|_{L^2(\Omega)}^2 dV(z)<\infty . \]
	
\end{lemma}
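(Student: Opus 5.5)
The plan is to write the Hilbert--Schmidt norm in a reproducing-kernel basis-free form. Fix an orthonormal basis $\{e_j\}$ of $A^2(\Omega)$. The target identity is
\[\|H_\phi\|_{HS}^2=\sum_j\|H_\phi e_j\|^2=\int_\Omega \|H_\phi K_z\|^2_{L^2(\Omega)}\,dV(z),\]
with both sides allowed to be $+\infty$. Once this holds, the lemma follows immediately in both directions.

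First, I would note that $H_\phi$ is bounded, since $\|H_\phi f\|\le\|\phi\|_\infty\|f\|$, and that $H_\phi^*$ maps $L^2(\Omega)$ into $A^2(\Omega)$. Then I use the Hilbert--Schmidt identity $\|H_\phi\|_{HS}^2=\|H_\phi^*\|_{HS}^2$, which holds in $[0,\infty]$ for any bounded operator between Hilbert spaces. It lets me swap the roles of the two spaces.

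Next I compute $\|H_\phi^*\|_{HS}$ using the kernel. For $g\in L^2(\Omega)$ and $z\in\Omega$, the reproducing property gives
\[(H_\phi^*g)(z)=\langle H_\phi^* g, K_z\rangle=\langle g, H_\phi K_z\rangle.\]
Take an orthonormal basis $\{g_k\}$ of $L^2(\Omega)$. By Tonelli and Parseval,
\[\|H_\phi^*\|_{HS}^2=\sum_k\int_\Omega |(H_\phi^*g_k)(z)|^2\,dV(z)=\int_\Omega\sum_k|\langle g_k,H_\phi K_z\rangle|^2\,dV(z)=\int_\Omega\|H_\phi K_z\|^2\,dV(z).\]
Here $K_z\in A^2(\Omega)$ for each $z$, so $H_\phi K_z$ is a genuine element of $L^2(\Omega)$. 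Also, $z\mapsto \|H_\phi K_z\|^2$ is measurable: it is a countable sum of measurable functions, because each $z\mapsto\langle H_\phi^* g_k,K_z\rangle=(H_\phi^*g_k)(z)$ is holomorphic. This gives the identity in $[0,\infty]$, hence the equivalence.

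The only delicate point is justifying the interchange of sum and integral and the adjoint-norm identity when the quantities may be infinite. Both rest on nonnegative-term rearrangement (Tonelli), so I expect no real obstacle. Boundedness of $\Omega$ is used only to ensure $L^\infty\subset$ multipliers on $L^2$ and that $A^2(\Omega)$ is nontrivial with a bounded kernel on compacts.
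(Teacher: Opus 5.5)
Your proof is correct and is essentially the paper's argument: the paper applies Zhu's trace formula $\operatorname{tr}(T)=\int_\Omega\langle TK_z,K_z\rangle\,dV(z)$ to the positive operator $T=H_\phi^*H_\phi$, and that formula is proved by the same reproducing-property/Parseval/Tonelli computation. The only difference is that you route it through $H_\phi^*\colon L^2(\Omega)\to A^2(\Omega)$ with an orthonormal basis of $L^2(\Omega)$ and the identity $\|H_\phi\|_{HS}=\|H_\phi^*\|_{HS}$, rather than through $(H_\phi^*H_\phi)^{1/2}$ with a basis of $A^2(\Omega)$, which conveniently avoids the square root.
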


\begin{proof} The proof of this lemma can be obtained by slightly modifying the proof of \cite[Theorem 6.4]{ZhuBook} applied to the operator $H_{\phi}^*H_{\phi}$.
\end{proof}
Now we can prove Theorem \ref{prelim}.

\begin{proof}[Proof of Theorem \ref{prelim}]
It is well known that $C^{\infty}$-smoothly bounded strongly pseudoconvex domains are BC-regular (see \cite{cs21}).  That is, $B(f)$ extends to a continuous function on $\overline{\Omega}$ for any $f\in C(\overline{\Omega})$.  Without loss of generality, we also denote this extension as $B(f)$.  Furthermore, $B(f)=f$ on the boundary of the domain for any $f\in C(\overline{\Omega})$.
That is, for $z\in b\Omega$ and $\psi$ defined as in Proposition \ref{prop1}, \begin{equation}\label{eq:1}\sum_{j=1}^n \left|\frac{\partial \phi}{\partial \overline{z_j}}(z)\right|^2-\left|\sum_{j=1}^n \frac{\partial \phi}{\partial \overline{z_j}}(z)\frac{\partial \rho}{\partial z_j}(z)\right|^2=\lim_{\Omega\ni\zeta\to z}\left\langle \overline{\partial}(\phi)k_{\zeta}, \overline{\partial}(\psi)k_{\zeta}\right\rangle.\end{equation}  Let us assume that $H_{\phi}$ is Hilbert-Schmidt.  For the sake of obtaining a contradiction, suppose there exists $z_0\in b\Omega$ so that
\[\sum_{j=1}^n \left|\frac{\partial \phi}{\partial \overline{z_j}}(z_0)\right|^2-\left|\sum_{j=1}^n \frac{\partial \phi}{\partial \overline{z_j}}(z_0)\frac{\partial \rho}{\partial z_j}(z_0)\right|^2\neq 0.\]  By continuity,
\[\sum_{j=1}^n \left|\frac{\partial \phi}{\partial \overline{z_j}}(z)\right|^2-\left|\sum_{j=1}^n \frac{\partial \phi}{\partial \overline{z_j}}(z)\frac{\partial \rho}{\partial z_j}(z)\right|^2\neq 0\] for $z$ in some open patch $P$ of $b\Omega$.  
For $\varepsilon>0$, let us define $\Omega_{\varepsilon}=\{z\in \Omega: d_{b\Omega}(z)>\varepsilon\}$.  Since $\Omega$ has a $C^2$-smooth boundary, for every $p\in P$ there exists an inward pointing unit normal vector 
based at $p$, called $v_p$.  Let $V_p$ denote the union of all points along $v_p$. Now define \[U_{\varepsilon}=\left(\bigcup_{p\in P}V_p\right)\cap (\Omega\setminus \overline{\Omega_{\varepsilon}}).\]
Notice that $\text{vol}(\Omega\setminus \overline{\Omega_{\varepsilon}})\approx\varepsilon$.  By partitioning $b\Omega\setminus P$ into finitely many open sets, one can cover  
$\Omega\setminus \overline{\Omega_{\varepsilon}}$ with a finite number of open sets with measure $\text{vol}(U_{\varepsilon})$.  Moreover, since the number of sets depends on $P$, one can choose the number of covering sets to be independent of $\varepsilon$.  That is,
$\text{vol}(U_{\varepsilon})\approx \varepsilon$.
Now we use \eqref{eq:1} and the fact that $C^{\infty}$-smoothly bounded strongly pseudoconvex domains are BC-regular (see \cite{cs21}) 
to write
\[\left|\left\langle \overline{\partial}(\phi)k_z, \overline{\partial}(\psi)k_z\right\rangle\right|\geq \frac{M}{2}\] 
for all $z\in U_{\varepsilon}$ and for $\varepsilon>0$ sufficiently small and fixed $M$.  Thus we have for all $\varepsilon>0$ small there exists a subdomain $U_{\varepsilon}\subset \Omega$ so that the following hold.
\begin{enumerate}
	\item \[\left|\left\langle \overline{\partial}(\phi)k_z, \overline{\partial}(\psi)k_z\right\rangle\right|\geq \frac{M}{2}>0\] for $z\in {U_{\varepsilon}}$ and fixed $M$.
	\item The Lebesgue volume measure $\text{vol}(U_{\varepsilon})\approx \varepsilon$.
	\item $U_{\varepsilon}\subset \{w\in \Omega: d_{b\Omega}(w)<\varepsilon\}$.
\end{enumerate}
Now by \cite[Lemma 4.3.2 (ii)]{ChenShawBook}, smooth compactly supported $(0,1)$-forms are dense in the 
graph norm of $\overline{\partial}^*$.  By Proposition \ref{lem1}, $\overline{\partial}(\psi )\in \text{dom}(\overline{\partial}^*)$. That is, there exists $\{\chi_{\varepsilon}\}_{\varepsilon>0}\subset C^{\infty}_{(0,1)}(\Omega)$ and compactly supported in $\Omega$ so that $\chi_{\varepsilon}\to \overline{\partial}(\psi )$ in $L^2_{(0,1)}(\Omega)$ and 
$\overline{\partial}^*(\chi_{\varepsilon})\to \overline{\partial}^*\overline{\partial}(\psi )$ in $L^2(\Omega)$ as $\varepsilon\to 0^+$.  One can construct $\chi_{\varepsilon}$ as follows.  Let $C_1'>0$ be a fixed constant to be determined later.  Restrict $\overline{\partial}(\psi)$ componentwise on $\overline{\Omega_{C_1'\varepsilon}}$ and extend this restriction to be identically $0$ on $\mathbb{C}^n\setminus \overline{\Omega_{C_1'\varepsilon}}$.  Denote this constructed form as $\widetilde{\overline{\partial}(\psi)}_{C_1'\varepsilon}$.  Let $\{\Gamma_{C_1\varepsilon}\}_{\varepsilon>0}$ be the standard $C^{\infty}$-smooth mollifier compactly supported on $C_1\varepsilon \mathbb{B}_n$.  Now define the componentwise convolution as
\[\chi_{\varepsilon}=\widetilde{\overline{\partial}(\psi)}_{C_1'\varepsilon}\ast \Gamma_{C_1\varepsilon}. \]
Once $C_1'$ is determined, we will let $C_1=\frac{C_1'}{3}$.  This will guarantee that
\[\overline{\Omega_{C_1'\varepsilon}+C_1\varepsilon \mathbb{B}_n}\subset \Omega\] and imply that $\chi_{\varepsilon}$ will have compact support in $\Omega$.  Additionally, one can write
\[\chi_{\varepsilon}=\sum_{k=1}^n \chi_{k,\varepsilon} d\overline{z_k}\] where the component functions of $\chi_{\varepsilon}$ have the form
\[\chi_{k,\varepsilon}=\widetilde{\overline{\partial}(\psi)}_{k,C_1'\varepsilon}\ast \Gamma_{C_1\varepsilon}. \]  Here, one can also write the $(0,1)$-form 
\[\widetilde{\overline{\partial}(\psi)}_{C_1'\varepsilon}=\sum_{k=1}^n \widetilde{\overline{\partial}(\psi)}_{k,C_1'\varepsilon}d\overline{z_k}.\]  
Furthermore, since $\chi_{\varepsilon}$ is constructed by convolving functions with uniformly bounded variation with a smooth compactly supported mollifier, one can show that
\[\|\chi_{\varepsilon}-\overline{\partial}(\psi)\|_{L^2(\Omega)}\lesssim (C_1'\varepsilon)^{\frac{1}{2}}\] for all $\varepsilon>0$.  We will prove this crucial estimate for the convenience of the reader.  
It is well known that $\Omega\subset \mathbb{C}^n$ is isometrically isomorphic to a domain
$\widetilde{\Omega}\subset \mathbb{R}^{2n}$.  Likewise, we may assume that
$\widetilde{\Omega_{C_1'\varepsilon}}$ are subdomains of $\widetilde{\Omega}$ that are isometrically isomorphic to 
$\Omega_{C_1'\varepsilon}$.

\[\widetilde{\overline{\partial}(\psi)}_{k,C_1'\varepsilon} \] are smooth on the support $\overline{\Omega_{C_1'\varepsilon}}$.  Furthermore, by construction, they extend to a smooth function on $\overline{\Omega}$.  Let us assume \[\widetilde{\overline{\partial}(\psi)}_{k,C_1'\varepsilon}=\Re\left(\widetilde{\overline{\partial}(\psi)}_{k,C_1'\varepsilon}   \right)+i\Im\left(\widetilde{\overline{\partial}(\psi)}_{k,C_1'\varepsilon}\right). \]  We can write each complex variable $z_k=\Re(z_k)+i\Im(z_k)$ for each $k\in \{1,\ldots, n\}$.  Then we can consider the real and imaginary parts of $\widetilde{\overline{\partial}(\psi)}_{k,C_1'\varepsilon} $ to be functions in $2n$  real variables.  We let $\widetilde{\nabla}(\cdot)$ represent the real gradient.  We denote $\textbf{n}$ to be the $2n$-component real vector field of outward pointing unit normal vectors to $b\widetilde{\Omega_{C_1'\varepsilon}}$.  Then
since \[\Re\left(\widetilde{\overline{\partial}(\psi)}_{k,C_1'\varepsilon}\right)\equiv \Re\left(\frac{\partial \psi}{\partial\overline{z_k}}\right)\]
on $\overline{\widetilde{\Omega_{C_1'\varepsilon}}}$, we have
\[\left\|   \widetilde{\nabla}\left(\Re\left(\widetilde{\overline{\partial}(\psi)}_{k,C_1'\varepsilon}\right)      \right)\right\|_{L^{\infty}(\widetilde{\Omega_{C_1'\varepsilon})}}\leq m_1\] and 
\[\left\|  \Re\left(\widetilde{\overline{\partial}(\psi)}_{k,C_1'\varepsilon}     \right)\right\|_{L^{\infty}(\widetilde{\Omega_{C_1'\varepsilon})}}\leq m_2\] for some $m_1,m_2>0$ independent of $k$ and $C_1'\varepsilon$.  We also let $\text{div}(\cdot)$ denote the (real) divergence of a vector field.  We let $g:\widetilde{\Omega}\to \mathbb{R}^{2n}$ be a compactly supported test vector field with $C^1$ component functions (one can call this set of vector fields $\widetilde{C}_0(\widetilde{\Omega})$) and
$\|g\|_{L^{\infty}(\widetilde{\Omega})}\leq 1$.  Now by \cite[Proposition 9.3, ii]{Br} combined with \cite[Remark 6, pg. 269]{Br} on functions of bounded variation, we will use integration by parts to compute the following.
\begin{align*}\left|\int_{\widetilde{\Omega}} \Re\left(\widetilde{\overline{\partial}(\psi)}_{k,C_1'\varepsilon}\right)\text{div}(g)\, dV\right|
=&\left|\int_{\widetilde{\Omega_{C_1'\varepsilon}}} \Re\left(\widetilde{\overline{\partial}(\psi)}_{k,C_1'\varepsilon}\right)\text{div}(g)\, dV\right|\\
\leq& \left|\int_{\widetilde{\Omega_{C_1'\varepsilon}}} \widetilde{\nabla}\left(\Re\left(\widetilde{\overline{\partial}(\psi)}_{k,C_1'\varepsilon}\right)\right) \cdot g\, dV\right|\\
+& \left|\int_{b\widetilde{\Omega_{C_1'\varepsilon}}} \Re\left(\widetilde{\overline{\partial}(\psi)}_{k,C_1'\varepsilon}\right) g\,\cdot \textbf{n}\, d\sigma_{\varepsilon}\right|\\
\leq & \text{vol}(\widetilde{\Omega_{C_1'\varepsilon}})m_1 \|g\|_{L^{\infty}(\widetilde{\Omega})}+\sigma_{\varepsilon}(b\widetilde{\Omega_{C_1'\varepsilon}})m_2 \|g\|_{L^{\infty}(\widetilde{\Omega})}\\
\lesssim & \|g\|_{L^{\infty}(\widetilde{\Omega})}.\\ 
\end{align*} 
Here, $\sigma_{\varepsilon}$ is the surface area measure on $b\widetilde{\Omega_{C_1'\varepsilon}}$.  We additionally note that the surface area of $b\widetilde{\Omega_{C_1'\varepsilon}}$ also has a uniform upper bound independent of $C_1'\varepsilon$ and $g$.  That is, there exists $m_3>0$ independent of $k$, $g$, and $C_1'\varepsilon$ so that 
\[\left|\int_{\widetilde{\Omega}} \Re\left(\widetilde{\overline{\partial}(\psi)}_{k,C_1'\varepsilon}\right)\text{div}(g)\, dV\right|\leq m_3 \|g\|_{L^{\infty}(\widetilde{\Omega})}.\]  A similar estimate holds for the imaginary part.  Therefore, by taking supremum over all such test vector fields $g$, this estimate implies that the real and imaginary parts of  \[\widetilde{\overline{\partial}(\psi)}_{k,C_1'\varepsilon} \] have bounded variation.  In other words, the distributional derivative of both the real and imaginary parts are finite Radon measures.  We let $\text{TV}$ denote total variation.  That is, 
\[\text{TV}\left(\Re\left(\widetilde{\overline{\partial}(\psi)}_{k,C_1'\varepsilon}\right)\right)=\sup\left\{\left|\int_{\widetilde{\Omega}} \Re\left(\widetilde{\overline{\partial}(\psi)}_{k,C_1'\varepsilon}\right)\text{div}(g)\, dV\right|:g\in \widetilde{C}_0(\widetilde{\Omega})\, ,\, \|g\|_{L^{\infty}(\widetilde{\Omega})}\leq 1 \right\}.\]  Additionally, there is a uniform bound on the total variation of both the real and imaginary parts not depending on $C_1'\varepsilon$ and not dependent on $k$ because of 
\[\sup\left\{\left|\int_{\widetilde{\Omega}} \Re\left(\widetilde{\overline{\partial}(\psi)}_{k,C_1'\varepsilon}\right)\text{div}(g)\, dV\right|:g\in \widetilde{C}_0(\widetilde{\Omega})\, ,\, \|g\|_{L^{\infty}(\widetilde{\Omega})}\leq 1 \right\}\leq m_3.\]  A similar estimate holds for the imaginary part $\Im\left(\widetilde{\overline{\partial}(\psi)}_{k,C_1'\varepsilon}\right)$.  
There is also an estimate on the translation error for functions of bounded variation.  Using \cite[Lemma 14.37, pg. 483]{GL} (see also \cite[Proposition 9.3, iii]{Br} and \cite[Remark 6, pg. 269]{Br}) applied to the real and imaginary parts of compactly supported
$\widetilde{\overline{\partial}(\psi)}_{k,C_1'\varepsilon}$, we have the following translation error estimate.  Here, we are also using the fact that $\Omega$ is isometrically isomorphic to $\widetilde{\Omega}$ to conclude the following.
\begin{align*}\left\| \widetilde{\overline{\partial}(\psi)}_{k,C_1'\varepsilon}(\cdot-h)-\widetilde{\overline{\partial}(\psi)}_{k,C_1'\varepsilon}(\cdot)\right\|_{L^1(\Omega)}
\leq & \left\| \Re\left(\widetilde{\overline{\partial}(\psi)}_{k,C_1'\varepsilon}(\cdot-h)-\widetilde{\overline{\partial}(\psi)}_{k,C_1'\varepsilon}(\cdot)\right)\right\|_{L^1(\Omega)}\\
+&\left\|\Im\left(\widetilde{\overline{\partial}(\psi)}_{k,C_1'\varepsilon}(\cdot-h)-\widetilde{\overline{\partial}(\psi)}_{k,C_1'\varepsilon}(\cdot)\right)\right\|_{L^1(\Omega)} \\
\lesssim &\text{TV}\left(\Re\left(\widetilde{\overline{\partial}(\psi)}_{k,C_1'\varepsilon}\right)\right)\|h\|_E+\text{TV}\left(\Im\left(\widetilde{\overline{\partial}(\psi)}_{k,C_1'\varepsilon}\right)\right)\|h\|_E\\
\lesssim& \|h\|_E.
\end{align*} 
Now by convolving with our approximate identity $\Gamma_{C_1\varepsilon}$ and using Fubini's theorem, we get the following upper bound for all $k\in \{1,\ldots,n\}$ to arrive at the required $L^1$ estimate.

\begin{align}\label{eq0}
\left\|\chi_{k,\varepsilon}-\widetilde{\overline{\partial}(\psi)}_{k,C_1'\varepsilon}\right\|_{L^1(\Omega)}
\leq & \int_{\Omega}\int_{C_1\varepsilon \mathbb{B}_n}\left|\widetilde{\overline{\partial}(\psi)}_{k,C_1'\varepsilon}(z-h)-\widetilde{\overline{\partial}(\psi)}_{k,C_1'\varepsilon}(z)\right|\left|\Gamma_{C_1\varepsilon}(h)\right|dV(h)dV(z)\\
=&\int_{C_1\varepsilon\mathbb{B}_n}\int_{\Omega}\left|\widetilde{\overline{\partial}(\psi)}_{k,C_1'\varepsilon}(z-h)-\widetilde{\overline{\partial}(\psi)}_{k,C_1'\varepsilon}(z)\right|\left|\Gamma_{C_1\varepsilon}(h)\right|dV(z)dV(h)\\
=&\int_{C_1\varepsilon\mathbb{B}_n}\left\| \widetilde{\overline{\partial}(\psi)}_{k,C_1'\varepsilon}(\cdot-h)-\widetilde{\overline{\partial}(\psi)}_{k,C_1'\varepsilon}(\cdot)\right\|_{L^1(\Omega)}\left|\Gamma_{C_1\varepsilon}(h)\right|dV(h)\\
\lesssim & \int_{C_1\varepsilon\mathbb{B}_n}\left\| h\right\|_{E}\left|\Gamma_{C_1\varepsilon}(h)\right|dV(h)\\ 
\leq & C_1 \varepsilon \int_{C_1\varepsilon\mathbb{B}_n}\left|\Gamma_{C_1\varepsilon}(h)\right|dV(h)\\
\label{eq01}\lesssim & C_1'\varepsilon.
\end{align}
Additionally, by Young's inequality for convolutions we write  
\begin{align}\label{y1}
  \|\chi_{k,\varepsilon}\|_{L^{\infty}(\Omega)}+\left\|  \widetilde{\overline{\partial}(\psi)}_{k,C_1'\varepsilon}   \right\|_{L^{\infty}(\Omega)}
	 \leq&    \left\|\widetilde{\overline{\partial}(\psi)}_{k,C_1'\varepsilon}         \right\|_{L^{\infty}(\Omega)}\|         \Gamma_{C_1\varepsilon}\|_{L^{1}(\mathbb{C}^n)}+\left\|  \frac{\partial \psi}{\partial\overline{z_k}}  \right\|_{L^{\infty}(\Omega)}\leq \alpha_1 
\end{align}
for all $k\in \{1,\ldots, n\}$ and $\alpha_1$ independent of $C_1'\varepsilon>0$.
We also note that for each $k\in \{1,\ldots,n\}$,
\[\left\| \widetilde{\overline{\partial}(\psi)}_{k,C_1'\varepsilon}-\frac{\partial \psi}{\partial\overline{z_k}}       \right\|^2_{L^2(\Omega_{C_1'\varepsilon})}=0\] implies that
\begin{align}\label{eq3}
\left\| \widetilde{\overline{\partial}(\psi)}_{k,C_1'\varepsilon}-\frac{\partial \psi}{\partial\overline{z_k}}       \right\|^2_{L^2(\Omega)}=&\left\| \widetilde{\overline{\partial}(\psi)}_{k,C_1'\varepsilon}-\frac{\partial \psi}{\partial\overline{z_k}}  \right\|^2_{L^2(\Omega\setminus \Omega_{C_1'\varepsilon})}\\
\leq& \left(\left\|\widetilde{\overline{\partial}(\psi)}_{k,C_1'\varepsilon}\right\|_{L^{\infty}(\Omega)}+\left\|  \frac{\partial \psi}{\partial \overline{z_k}} \right\|_{L^{\infty}(\Omega)}\right)^2\,\text{vol}(\Omega\setminus \Omega_{C_1'\varepsilon})\\
\leq & \left(2\left\|  \frac{\partial \psi}{\partial \overline{z_k}} \right\|_{L^{\infty}(\Omega)}\right)^2\,\text{vol}(\Omega\setminus \Omega_{C_1'\varepsilon})\\
\label{eq4}\lesssim &\, \text{vol}(\Omega\setminus \Omega_{C_1'\varepsilon}).
\end{align}
Thus we will combine these estimates to arrive at the following estimate for $L^2$.  That is, we will bound the first term in the sum below by the $L^1$ upper bound and $L^{\infty}$ upper bound as seen on the previous page in the inequalities \eqref{eq0} to \eqref{eq01} and \eqref{y1}.  We will bound the second term in the sum below using \eqref{eq3} to \eqref{eq4} above.  We have

\begin{align*}
	\|\chi_{\varepsilon}-\overline{\partial}(\psi)\|_{L^2(\Omega)}\leq & \left\|\chi_{\varepsilon}-\widetilde{\overline{\partial}(\psi)}_{C_1'\varepsilon}\right\|_{L^2(\Omega)}+\left\|\widetilde{\overline{\partial}(\psi)}_{C_1'\varepsilon}-\overline{\partial}(\psi)\right\|_{L^2(\Omega)}\\
	=& \left(\sum_{k=1}^n \left\|\chi_{k,\varepsilon}-\widetilde{\overline{\partial}(\psi)}_{k,C_1'\varepsilon}\right\|^2_{L^2(\Omega)}\right)^{\frac{1}{2}}+\left(\sum_{k=1}^n \left\| \widetilde{\overline{\partial}(\psi)}_{k,C_1'\varepsilon}-\frac{\partial \psi}{\partial\overline{z_k}}       \right\|^2_{L^2(\Omega)}\right)^{\frac{1}{2}}      \\
	\leq & \alpha_1^{\frac{1}{2}}\left(\sum_{k=1}^n \left\|\chi_{k,\varepsilon}-\widetilde{\overline{\partial}(\psi)}_{k,C_1'\varepsilon}\right\|_{L^1(\Omega)}\right)^{\frac{1}{2}}+\left(\sum_{k=1}^n \left\| \widetilde{\overline{\partial}(\psi)}_{k,C_1'\varepsilon}-\frac{\partial \psi}{\partial\overline{z_k}}       \right\|^2_{L^2(\Omega\setminus \Omega_{C_1'\varepsilon})}\right)^{\frac{1}{2}}\\
	\leq & \left(n \alpha_1\right)^{\frac{1}{2}}\alpha_2 \left(C_1'\varepsilon\right)^{\frac{1}{2}}+\alpha_3 \sqrt{n} \left(\text{vol}(\Omega\setminus\Omega_{C_1'\varepsilon})\right)^{\frac{1}{2}}\\
	\lesssim & \left(C_1'\varepsilon\right)^{\frac{1}{2}}.
\end{align*}
Where constants $\alpha_1,\alpha_2, \alpha_3>0$ are independent of $C_1'\varepsilon$.
This proves the estimate
\[\|\chi_{\varepsilon}-\overline{\partial}(\psi)\|_{L^2(\Omega)}\lesssim (C_1'\varepsilon)^{\frac{1}{2}}.\]  
Let us now estimate the following from below.
\begin{align*}\frac{M}{2}\leq& \left|\left\langle \overline{\partial}(\phi)k_z, \overline{\partial}(\psi)k_z\right\rangle\right|
\leq \left|\left\langle \overline{\partial}(\phi)k_z, \left(\overline{\partial}(\psi)-\chi_{\varepsilon}\right)k_z\right\rangle\right|+\left|\left\langle \overline{\partial}(\phi)k_z, \chi_{\varepsilon}k_z\right\rangle\right|\\
=&\left|\left\langle \overline{\partial}(\phi)k_z, \left(\overline{\partial}(\psi)-\chi_{\varepsilon}\right)k_z\right\rangle\right|+\left|\left\langle \overline{\partial}(H_{\phi}k_z), \chi_{\varepsilon}k_z\right\rangle\right|\\
\leq & \sum_{j=1}^n \left|\left\langle \frac{\partial\phi}{\partial \overline{w_j}}\left(\overline{\frac{\partial \psi}{\partial \overline{w_j}}}-\overline{\chi_{j,\varepsilon}}\right)k_z, k_z\right\rangle\right|+\left|\left\langle H_{\phi}k_z, \overline{\partial}^*\left(\chi_{\varepsilon}k_z\right)\right\rangle\right|.
\end{align*} for all $z\in {U_{\varepsilon}}$.  Since \[\text{vol}(U_{\varepsilon})\approx \varepsilon,\] there exists $C_2>1$ independent of $C_1'$ and $\varepsilon$ so that \[\frac{1}{C_2}\varepsilon\leq \text{vol}(U_{\varepsilon})\leq C_2\varepsilon \]   Now we integrate the above string of inequalities over $U_{\varepsilon}$ and use the Cauchy-Schwarz estimate to get the following.
\begin{align*}\frac{M \varepsilon}{2C_2}\leq& \int_{U_{\varepsilon}}\left|\left\langle \overline{\partial}(\phi)k_z, \overline{\partial}(\psi)k_z\right\rangle\right|dV(z)\\
	\leq& \int_{U_{\varepsilon}}\left|\left\langle \overline{\partial}(\phi)k_z, \left(\overline{\partial}(\psi)-\chi_{\varepsilon}\right)k_z\right\rangle\right|dV(z)+\int_{U_{\varepsilon}}\left|\left\langle \overline{\partial}(\phi)k_z, \chi_{\varepsilon}k_z\right\rangle\right|dV(z)\\
	\leq & \sum_{j=1}^n \int_{U_{\varepsilon}}\left|\left\langle \frac{\partial\phi}{\partial \overline{w_j}}\left(\overline{\frac{\partial \psi}{\partial \overline{w_j}}}-\overline{\chi_{j,\varepsilon}}\right)k_z, k_z\right\rangle\right|dV(z)+\int_{U_{\varepsilon}}\left|\left\langle H_{\phi}k_z, \overline{\partial}^*\left(\chi_{\varepsilon}k_z\right)\right\rangle\right|dV(z)\\
	\leq & \sum_{j=1}^n \int_{U_{\varepsilon}}\left|\left\langle \frac{\partial\phi}{\partial \overline{w_j}}\left(\overline{\frac{\partial \psi}{\partial \overline{w_j}}}-\overline{\chi_{j,\varepsilon}}\right)k_z, k_z\right\rangle\right|dV(z)+\int_{U_{\varepsilon}}  \left\|H_{\phi}k_z\right\|  \left\|  \overline{\partial}^*\left(\chi_{\varepsilon}k_z\right)     \right\|dV(z)\\
	\leq & \left(C_2\varepsilon\right)^{\frac{1}{2}} \sum_{j=1}^n \left\|B\left(\frac{\partial\phi}{\partial \overline{w_j}}\left(\overline{\frac{\partial \psi}{\partial \overline{w_j}}}-\overline{\chi_{j,\varepsilon}}\right)\right)\right\|_{L^2(\Omega)}\\
	+&\left(\int_{U_{\varepsilon}}\|H_{\phi}k_z\|^2 dV(z)\right)^{\frac{1}{2}}\left(\int_{U_{\varepsilon}}\|\overline{\partial}^*\left(\chi_{\varepsilon}k_z\right)\|^2  dV(z)      \right)^{\frac{1}{2}}.\\
\end{align*}
It is a well known result that the Berezin transform is $L^2$-bounded on $C^{\infty}$-smooth bounded strongly pseudoconvex domains.  As a reference for the $L^2$-boundedness of the Berezin transform on such domains, we refer the reader to \cite{gs2}.  Therefore using \[\|\chi_{\varepsilon}-\overline{\partial}(\psi)\|_{L^2(\Omega)}\lesssim \left(C_1'\varepsilon\right)^{\frac{1}{2}}\] one has the estimates
\begin{align*}
\varepsilon^{\frac{1}{2}} \sum_{j=1}^n \left\|B\left(\frac{\partial\phi}{\partial \overline{w_j}}\left(\overline{\frac{\partial \psi}{\partial \overline{w_j}}}-\overline{\chi_{j,\varepsilon}}\right)\right)\right\|_{L^2(\Omega)}\lesssim &\varepsilon^{\frac{1}{2}}
\sum_{j=1}^n \left\|\left(\frac{\partial\phi}{\partial \overline{w_j}}\left(\overline{\frac{\partial \psi}{\partial \overline{w_j}}}-\overline{\chi_{j,\varepsilon}}\right)\right)\right\|_{L^2(\Omega)}\\
\lesssim & \varepsilon^{\frac{1}{2}} \sum_{j=1}^n \left\|\frac{\partial \psi}{\partial \overline{w_j}}-\chi_{j,\varepsilon}\right\|_{L^2(\Omega)}\\
\lesssim & \varepsilon^{\frac{1}{2}}\|\chi_{\varepsilon}-\overline{\partial}(\psi)\|_{L^2(\Omega)}\lesssim {(C_1')}^{\frac{1}{2}}\varepsilon.
\end{align*}
That is, there exists $\alpha_4>0$ so that
\[\left(C_2\varepsilon\right)^{\frac{1}{2}}\sum_{j=1}^n\left\|B\left(\frac{\partial\phi}{\partial \overline{w_j}}\left(\overline{\frac{\partial \psi}{\partial \overline{w_j}}}-\overline{\chi_{j,\varepsilon}}\right)\right)\right\|_{L^2(\Omega)}\leq \alpha_4 \left(C_1' \right)^{\frac{1}{2}}\varepsilon.\]  Notice that the constants $\alpha_4$ and $C_2$ are independent of $C_1'$ and $\varepsilon$.  So we can now specify $C_1'$ in our definition of $\chi_{\varepsilon}$.  We recall that $C_1=\frac{C_1'}{3}$ to ensure that $\chi_{\varepsilon}$ is compactly supported in $\Omega$.  We let
\[C_1'=\frac{M^2 }{16\, C_2^2\,\alpha_4^2}.\]  Then it is clear that

\[\left(C_2\varepsilon\right)^{\frac{1}{2}} \sum_{j=1}^n \left\|B\left(\frac{\partial\phi}{\partial \overline{w_j}}\left(\overline{\frac{\partial \psi}{\partial \overline{w_j}}}-\overline{\chi_{j,\varepsilon}}\right)\right)\right\|_{L^2(\Omega)}\leq \frac{M\varepsilon}{4C_2}. \]
It is a well known fact (see for instance \cite[Theorem 3.5.1]{hp}) that on bounded strongly pseudoconvex domains in $\mathbb{C}^n$, the asymptotic expansion for the Bergman kernel on the diagonal for $z$ near the boundary has the form

\[\frac{1}{K_z(z)}\approx d_{b\Omega}^{n+1}(z).\]
Since $H_{\phi}$ is Hilbert-Schmidt, by Lemma \ref{lem1}, 
\[\left(\int_{\Omega}\|H_{\phi}K_z\|^2dV(z)\right)^{\frac{1}{2}}<\infty.\]  Thus we have that
\[\left(\int_{U_{\varepsilon}}\|H_{\phi}k_z\|^2 dV(z)\right)^{\frac{1}{2}}\lesssim \varepsilon^{\frac{n+1}{2}}\left(\int_{U_{\varepsilon}}\|H_{\phi}K_z\|^2 dV(z)\right)^{\frac{1}{2}}.\]  Now it remains to show that 
\[\left(\int_{U_{\varepsilon}}\|\overline{\partial}^*(\chi_{\varepsilon}k_z)\|^2dV(z)\right)^{\frac{1}{2}}\lesssim \varepsilon^{-\frac{1}{2}}.\]
Using \cite[1.4.2, page 25]{hb} and Young's inequality for convolutions, one has the following bound.
\[\left\|\frac{\partial \chi_{k,\varepsilon}}{\partial \overline{w_j}}\right\|_{L^{\infty}(\Omega)}^2= \left\|   \widetilde{\overline{\partial}(\psi)}_{k,C_1'\varepsilon}\ast \frac{\partial\Gamma_{C_1\varepsilon}}{\partial \overline{w}_j}\right\|^2_{L^{\infty}(\mathbb{C}^n)}\leq \left\|\widetilde{\overline{\partial}(\psi)}_{k,C_1'\varepsilon}   \right\|_{L^{\infty}(\mathbb{C}^n)}^2 \left\|\frac{\partial\Gamma_{C_1\varepsilon}}{\partial \overline{w}_j}\right\|_{L^1(\mathbb{C}^n)}^2  \lesssim {\varepsilon}^{-2}\]
Now an application of the Morrey-Kohn-H\"{o}rmander estimate (see for example \cite[Proposition 4.3.1]{ChenShawBook}) results in the following estimate    

\[\|\overline{\partial}^*(\chi_{\varepsilon}k_z)\|_{L^2(\Omega)}^2\lesssim \sum_{j,k=1}^n \left\|\frac{\partial \chi_{k,\varepsilon}}{\partial \overline{w_j}}k_z\right\|_{L^2(\Omega)}^2\leq \sum_{j,k=1}^n \left\|\frac{\partial \chi_{k,\varepsilon}}{\partial \overline{w_j}}\right\|_{L^{\infty}(\Omega)}^2\lesssim {\varepsilon}^{-2}.  \]
We note that the general Morrey-Kohn-H\"{o}rmander estimate has another boundary integral term we did not include in this estimate.  The form $\chi_{\varepsilon}k_z$ has compact support in $\Omega$ so this boundary integral term vanishes.  Thus integrating over $z\in U_{\varepsilon}$ we have that
\[\left(\int_{U_{\varepsilon}}\|\overline{\partial}^*(\chi_{\varepsilon}k_z)\|^2dV(z)\right)^{\frac{1}{2}}\lesssim\varepsilon^{-\frac{1}{2}}.\]
Now we can combine all of these estimates to get the following.
\begin{align*}
\frac{M\varepsilon}{2C_2}\leq &\left(C_2\varepsilon\right)^{\frac{1}{2}} \sum_{j=1}^n \left\|B\left(\frac{\partial\phi}{\partial \overline{w_j}}\left(\overline{\frac{\partial \psi}{\partial \overline{w_j}}}-\overline{\chi_{j,\varepsilon}}\right)\right)\right\|_{L^2(\Omega)}\\
+&\left(\int_{U_{\varepsilon}}\|H_{\phi}k_z\|^2 dV(z)\right)^{\frac{1}{2}}\left(\int_{U_{\varepsilon}}\|\overline{\partial}^*\left(\chi_{\varepsilon}k_z\right)\|^2  dV(z)      \right)^{\frac{1}{2}}\\
\leq & \frac{M\varepsilon}{4C_2}+C'\varepsilon^{-\frac{1}{2}}\varepsilon^{\frac{n+1}{2}}\left(\int_{U_{\varepsilon}}\|H_{\phi}K_z\|^2 dV(z)\right)^{\frac{1}{2}}.
\end{align*}
Thus we have that \[\frac{M \varepsilon}{4C_2}\leq C' \varepsilon^{\frac{n}{2}}\left(\int_{U_{\varepsilon}}\|H_{\phi}K_z\|^2 dV(z)\right)^{\frac{1}{2}}\] for some $C'>0$ independent of $\varepsilon$.  This is equivalent to 
\[M'\leq  \varepsilon^{\frac{n-2}{2}}\left(\int_{U_{\varepsilon}}\|H_{\phi}K_z\|^2 dV(z)\right)^{\frac{1}{2}}\] for some $M'>0$ independent of $\varepsilon$.  We assumed that $H_{\phi}$ is Hilbert-Schmidt on $A^2(\Omega)$.  Therefore by Lemma \ref{lem1}, \[\| H_{\phi}K_z\|\in L^2(\Omega).\]  Also, the Lebesgue measure of $U_{\varepsilon}$ goes to $0$ as $\varepsilon\to 0^+$.  Therefore,
\[\lim_{\varepsilon\to 0^+}\left(\int_{U_{\varepsilon}}\|H_{\phi}K_z\|^2 dV(z)\right)^{\frac{1}{2}}=0.\]  Since we assumed that $n\geq 2$, we obtain a contradiction.  Thus we conclude that

\[\sum_{j=1}^n \left|\frac{\partial \phi}{\partial \overline{z_j}}(z)\right|^2-\left|\sum_{j=1}^n \frac{\partial \phi}{\partial \overline{z_j}}(z)\frac{\partial \rho}{\partial z_j}(z)\right|^2=0\] on $b\Omega$.  We recognize \[\sum_{j=1}^n \frac{\partial \phi}{\partial \overline{z_j}}(z)\frac{\partial \rho}{\partial z_j}(z)\] as the complex normal component of $\overline{\partial}(\phi)(z)$.  Hence the complex tangential component of $\overline{\partial}\phi$ is $0$ on $b\Omega$.  That is,
$\overline{\partial}_b(\phi)=0$.

\end{proof}

\section{Proof of Corollary \ref{cor1} }

\begin{proof}
Let us assume that $\phi$ satisfies the conditions of Theorem \ref{prelim} and is additionally harmonic on $\Omega$.
If $\phi$ is holomorphic on $\Omega$, then $H_{\phi}\equiv 0$ is trivially Hilbert-Schmidt.  Therefore, suppose $H_{\phi}$ is Hilbert-Schmidt on $A^2(\Omega)$.  Then by Theorem \ref{prelim}, $\overline{\partial}_b(\phi)=0$.
Thus the restriction of $\phi$ to $b\Omega$ is a CR-function.  Therefore, $\phi|_{b\Omega}$ extends to a holomorphic function on $\Omega$.  See \cite[Theorem 3.2.2]{ChenShawBook} for a proof of this result.  Hence $\phi$ is holomorphic by the maximum principle for harmonic functions.  

\end{proof}

\section{Acknowledgments}

I wish to thank S\"{o}nmez \c{S}ahuto\u{g}lu and Trieu Le for helpful suggestions on a preliminary version of this manuscript. I also thank the referee for their detailed comments.

\bibliographystyle{amsalpha}
\bibliography{hsrefs}

\end{document}